\newtheorem{theorem}{Theorem}
\newtheorem{lemma}[theorem]{Lemma}
\newtheorem{proposition}[theorem]{Proposition}
\newtheorem{obs}[theorem]{Observation} \newtheorem{defi}[theorem]{Definition}
\newenvironment{definition}{\begin{defi}\rm}{\end{defi}}
\newtheorem{exa}[theorem]{Example}
\newtheorem{rem}[theorem]{Remark}
\newenvironment{remark}{\begin{rem}\rm}{\end{rem}}
\newtheorem{rems}[theorem]{Remarks}
\newtheorem*{acknowlegment}{Acknowlegment}
\def\H{\mathcal H}
\def\K{\mathcal K}
\def\P{\mathcal P}
\def\NN{{\mathbf N}}
\def\ZZ{{\mathbf Z}}
\def\CCC{{\mathbf C}}
\def\RRR{{\mathbf R}}
\def\QQ{\mathbf Q}
\def\AA{{\mathbf A}}
\def\RR+{{\mathbf R}^*}
\def\Q_p{{\mathbf Q}_p}
\def\HH{\mathbf H}
\def\ind{{\rm Ind}}
\def\eps{\varepsilon}
\def\la{\lambda}
\def\vfi{\varphi}
\def\ind{{\rm Ind}}
\newcommand{\Tr}{\operatorname{Tr}}
\newcommand{\Hom}{\operatorname{Hom}}
\def\tout{\qquad\text{for all}\quad}
\def\Q{\mathbf Q}
\newtheorem{thmx}{Theorem}
\newtheorem{corx}[thmx]{Corollary}
\begin{document}

\title[CCR for general rings]{ Canonical Commutation Relations: A quick proof of the Stone-von Neumann theorem 
 and an extension to general rings}

\address{Bachir Bekka \\ Univ Rennes \\ CNRS, IRMAR--UMR 6625\\
Campus Beaulieu\\ F-35042  Rennes Cedex\\
 France}
 \date{\today}
\email{bachir.bekka@univ-rennes1.fr}
\author{Bachir Bekka}

\thanks{The author acknowledges the support  by the ANR (French Agence Nationale de la Recherche)
through the project Labex Lebesgue (ANR-11-LABX-0020-01) .}
\begin{abstract}
Let $R$ be  a (not necessary commutative)  ring with unit,
$d\geq 1$ an integer, and $\lambda$ a unitary character
of the additive group $(R,+).$
A pair  $(U,V)$ of unitary representations $U$ and $V$ 
of  $R^d$  on a Hilbert space $\H$  is said
to satisfy the  canonical commutation relations (relative to $\lambda$) if
$$U(a) V(b)= \lambda(a\cdot b)V(b) U(a)$$ for all $a=(a_1, \dots, a_d), b= (b_1, \dots, b_d)\in R^d$,
where $a\cdot b= \sum_{k=1}^d a_k b_k.$
We give a new and quick proof of the classical Stone von Neumann Theorem
about the   essential uniqueness of such a pair  in the case  where $R$ is a local field (e.g. $R= \mathbf{R}$).
Our methods allow us to give the following extension of this result 
to a general  locally compact ring $R$.  For a unitary representation  
$U$ of $R^d$ on a Hilbert space $\H, $ define the inflation $U^{(\infty)}$ of $U$ 
as the unitary representation $U^{(\infty)}$  of $R^d$ on the
Hilbert space  $ \H^{(\infty)}=\oplus_{i\in \NN} \H$ given by  
$U^{(\infty)}(a) (\oplus_{i\in \NN} \xi_i) =  \oplus_{i\in \NN} U(a) \xi_i.$

Let $(U_1, V_1), (U_2, V_2)$  be two pairs of unitary representations
of $R^d$ on corresponding Hilbert spaces $\H_1, \H_2$ satisfying the  canonical commutation relations
 (relative to $\lambda$).
 Provided  that $\la$ satisfies a mild faithful condition, 
we show  that the  inflations $(U_1^{(\infty)}, V_1^{(\infty)}), (U_2^{(\infty)}, V_2^{(\infty)})$
 are approximately equivalent, that is, there exists a sequence
$(\Phi_n)_n$ of unitary isomorphisms $\Phi_n: \H_1^{(\infty)}\to \H_2^{(\infty)}$  such  that
$$\lim_{n} \Vert U_2^{(\infty)}(a) - \Phi_n U_1^{(\infty)}(a) \Phi_n^{*}\Vert=0 
\quad \text{and} \quad \lim_{n} \Vert V_2^{(\infty)}(b) - \Phi_n V_1^{(\infty)}(b) \Phi_n^{*}\Vert=0,$$
uniformly on compact subsets of $R^d.$

\end{abstract}
\subjclass[2000]{22D10; 	43A25; 22B05 }
\maketitle
\section{Introduction}
Heisenberg's original canonical commutation relations (see \cite{Heisenberg2}) for a system with $d$ degrees of freedom
involve    position operators $q_1, \dots, q_d$ and   momentum operators
$p_1, \dots, p_d$,  which are
unbounded operators  on a complex Hilbert space 
satisfying the relations
$$p_k q_l-q_lp_k = \dfrac{h}{2\pi i} \delta_{kl}I,$$
where $h$ is Planck's constant.
Considering the unitary  operators 
$$U(t) = \exp(\sum_{k=1}^d  \dfrac{2\pi i}{h}t_k p_k) \quad\text{and}\quad
V(s) = \exp(\sum_{l=1}^d  \dfrac{2\pi i}{h} s_l q_l)$$
 on a Hilbert space $\H$ 
for $t=(t_1, \dots, t_d)$ and $s= (s_1, \dots, s_d)$ in $ \mathbf{R}^d,$
H. Weyl (see \cite[p. 26]{Weyl}) gave the canonical commutation relations the following form:
\begin{equation}
U(t) V(s)= e^{\frac{2\pi i}{h} t\cdot s}V(s) U(t) \tag*{(CCR)}\label{(CCR)}
\end{equation}
where  $t\cdot s= \sum_{k=1}^d t_ks_k.$
A solution of  \ref{(CCR)} is given by the so-called Schr\"odinger representation
and the classical Stone-von Neumann Theorem asserts that  this solution is  essentially unique 
(see below for  precise statements).
 We are going to give  a quick proof of this   theorem and to   extend it   to more general rings. 
 
 Given a topological group $G$ and a complex Hilbert space $\H,$ recall that a  unitary representation $\pi$
 of $G$ on $\H$ is a continuous homomorphism $\pi: G\to U(\H)$, where   $U(\H)$ is the group of unitary operators
of $\H$ equipped with the weak (or strong) operator topology.
For a finite or infinite countable cardinal  $n\in \{1, 2, \hdots, \infty \}$, we denote by
$$\H^{(n)}= \oplus_{i\in\{1,2, \hdots, n\}} \H$$
  the Hilbert space direct sum of $n$ many copies of $\H$
and  by $\pi^{(n)}$ the corresponding \emph{inflation} of $\pi;$
so,  $\pi^{(n)}: G \to U(\H^{(n)})$ is  the unitary representation of $G$ on $\H^{(n)}$ defined by 
$$\pi^{(n)}(g)= \oplus_{i\in\{1,2, \hdots, n\}} \pi(g) \tout g\in G.$$

 Let  $R$ be  a   locally compact  ring with unit; notice  that we do not assume that $R$ is commutative.
Fix   a unitary character  $\lambda$ of   the additive group $(R, +)$, that is,  a one dimensional unitary representation  of $(R,+)$.
We say that  a pair $(U,V)$ of unitary representations $U$ and $V$ 
of   $R^d$  on a  Hilbert space $\H$ satisfies the  canonical commutation relations (relative to $\lambda$) if
\begin{equation}
U(a) V(b)= \lambda(a\cdot b)V(b) U(a) \tag*{$\rm{(CCR)}_\lambda$} \label{CCR-la}
\end{equation} 
for all $a=(a_1, \dots, a_d)$ and $b= (b_1, \dots, b_d)$ in $ R^d,$
where $a\cdot b= \sum_{k=1}^d a_kb_k.$

It is quickly checked that a solution of \ref{CCR-la}, called \emph{Schr\"odinger pair}
(or \emph{Schr\"odinger representation}), is given by the pair $(U_{\text{Schr}}, V_{\text{Schr}})$    defined on $ L^2(R^d, \mu)$ by 
$$
(U_{\text{Schr}}(a)f)(x)= f(x+a) \quad\text{and} \quad (V_{\text{Schr}}(b) \xi)(x)= \lambda(x\cdot b) f(x) \quad\text{for} \quad f \in L^2(R^d, \mu),
$$
where $\mu$ is a Haar measure on $(R^d,+).$

There is an obvious equivalence relation on pairs of unitary representations
of $R^d$: two such pairs  $(U_1,V_1)$ and $(U_2, V_2)$ acting on  respective   Hilbert  spaces $\H_1$
and $\H_2$ are \emph{equivalent} if there exists a unitary  isomorphism $\Phi: \H_1\to \H_2$
such that
$$\Phi U_1(a) \Phi^*= U_2(a)\quad\text{and}\quad\Phi V_1(b)\Phi^*= V_2(b) $$
for all $a, b\in R^d.$

Let $\widehat{R}$ be the dual group of $R,$  that is, the group of all unitary characters of $(R,+).$
Given $\la$ in $\widehat{R},$ 
properties of the  continuous homomorphism
 $$
  \nabla_\lambda: R\mapsto \widehat{R},\  a\mapsto (x\mapsto \la(x\cdot a))
  $$
  will play an important role in the sequel.
\begin{thmx}
\label{Theo2}
(\textbf{Stone-von Neumann Theorem})
Let  $R$ be  a  unital   second-countable locally compact ring and let  $\lambda\in \widehat{R}.$
We assume that
\begin{subequations}
\begin{align}
 &\bullet \lambda(ab)= \lambda(ba) \quad\text{for all} \quad a,b\in R
 \tag*{(Sym)} \label{Sym} \\
&\bullet
 \nabla_\lambda: R\mapsto \widehat{R} \quad\text{is an isomorphism.}
\tag*{(Isom)}\label{(Iso)}  
\end{align}
\end{subequations}
Let $(U,V)$ be a pair of unitary representations  of  $R^d$ on a separable Hilbert space satisfying  \ref{CCR-la}. 
Then  the inflation  $(U^{(\infty)},V^{(\infty)})$ of $(U,V)$  is  equivalent to  the inflation  $(U_{\text{Schr}}^{(\infty)}, V_{\text{Schr}}^{(\infty)})$ of  the Schr\"odinger pair.
\end{thmx}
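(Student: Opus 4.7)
The plan is to encode a CCR pair $(U,V)$ as a $*$-representation of a twisted convolution algebra on $R^{2d}$, to identify this algebra via the Schr\"odinger pair with the compact operators $\mathcal{K}(L^{2}(R^{d},\mu))$ using \ref{Sym} and \ref{(Iso)}, and then to close by the multiplicity theory of representations of the compacts: every non-degenerate $*$-representation of $\mathcal{K}(\Hi)$ on a separable Hilbert space is unitarily equivalent to a multiple $\mathrm{id}^{(n)}$ of the identity for some $n\in\{1,2,\dots,\infty\}$, so $(U,V)\cong(U_{\mathrm{Schr}},V_{\mathrm{Schr}})^{(n)}$, and inflation collapses every such $n$ to $\infty$ as required by the theorem.

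Concretely, I would first introduce the Weyl operators $W(a,b):=U(a)V(b)$ and use \ref{CCR-la} to derive the composition law
$$W(a_{1},b_{1})\,W(a_{2},b_{2})=\overline{\lambda(a_{2}\cdot b_{1})}\,W(a_{1}+a_{2},\,b_{1}+b_{2}),$$
so that $W$ is a projective representation of $R^{2d}$ with 2-cocycle $\omega((a_{1},b_{1}),(a_{2},b_{2}))=\overline{\lambda(a_{2}\cdot b_{1})}$, together with the adjoint formula $W(a,b)^{*}=\overline{\lambda(a\cdot b)}\,W(-a,-b)$. Hypothesis \ref{Sym}, which gives $\lambda(a\cdot b)=\lambda(b\cdot a)$, is precisely what makes these identities compatible with the $*$-algebra structure when $R$ is non-commutative. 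Integration against $f\in L^{1}(R^{2d},\mu\otimes\mu)$ then produces a $*$-representation $\pi_{W}(f)=\int f(a,b)\,W(a,b)\,d\mu(a)d\mu(b)$ of the twisted convolution algebra $L^{1}(R^{2d},\omega)$, and the same recipe applied to the Schr\"odinger pair gives $\pi_{\mathrm{Schr}}$ acting on $L^{2}(R^{d},\mu)$.

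The crucial step, and where I expect the main obstacle, is identifying the $C^{*}$-completion of $L^{1}(R^{2d},\omega)$ with $\mathcal{K}(L^{2}(R^{d},\mu))$ through $\pi_{\mathrm{Schr}}$. By \ref{(Iso)}, the map $\nabla_{\lambda}^{d}$ identifies $R^{d}$ with its Pontryagin dual, turning $R^{d}$ into a self-dual LCA group whose Plancherel measure is a multiple of $\mu$. A direct computation in the Schr\"odinger picture shows that $\pi_{\mathrm{Schr}}(f)$ is the integral operator on $L^{2}(R^{d})$ whose kernel is the partial $\nabla_{\lambda}$-Fourier transform of $f$ in the second variable, and Plancherel then yields that this assignment is an isometry from $L^{2}(R^{2d})$ onto the Hilbert--Schmidt operators on $L^{2}(R^{d})$. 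Passing to $C^{*}$-completions gives the desired $*$-isomorphism with $\mathcal{K}(L^{2}(R^{d},\mu))$, under which $\pi_{W}$ becomes a non-degenerate $*$-representation of the compacts. Over a local field this Plancherel identity is classical; the content of the extension is that \ref{Sym} and \ref{(Iso)} are exactly what is needed to run the calculation for a general unital second-countable locally compact ring.
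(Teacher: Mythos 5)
Your outline follows the classical von Neumann--Segal--Weil route, which is essentially the opposite strategy to the one the paper uses, and it has a real gap precisely at the step you describe as ``passing to $C^{*}$-completions.''

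The Plancherel computation you sketch does show that $f\mapsto\pi_{\mathrm{Schr}}(f)$ sends $L^{1}\cap L^{2}(R^{2d})$, with twisted convolution, isometrically onto a dense $*$-subalgebra of the Hilbert--Schmidt operators, hence that the operator-norm closure of $\pi_{\mathrm{Schr}}(L^{1})$ is $\mathcal K(L^{2}(R^{d}))$. But what you actually need for the multiplicity-theory step is that $\pi_{W}$ factors through $\mathcal K$, i.e.\ that $\Vert\pi_{W}(f)\Vert\le\Vert\pi_{\mathrm{Schr}}(f)\Vert$ for every $f\in L^{1}$ and every CCR pair $(U,V)$. Equivalently you must show that the operator norm coming from $\pi_{\mathrm{Schr}}$ is the \emph{universal} $C^{*}$-norm on the twisted convolution algebra $L^{1}(R^{2d},\omega)$. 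Your sentence supplies no argument for this; it is exactly the content of the theorem. The standard repairs are (a) invoke amenability of the abelian group $R^{2d}$ to identify the universal twisted $C^{*}$-algebra with the reduced one, and then check via the same Plancherel identification that the left regular twisted representation on $L^{2}(R^{2d})$ acts by left multiplication by $\pi_{\mathrm{Schr}}(f)$ on $\mathrm{HS}$, so has the same operator norm; or (b) von Neumann's device: exhibit an explicit $f_{0}\in L^{1}\cap L^{2}$ with $f_{0}*_{\omega}f_{0}=f_{0}$ and $f_{0}*_{\omega}f*_{\omega}f_{0}=c(f)\,f_{0}$, forcing $\pi_{W}(f_{0})$ to be a projection in every representation and building the intertwiner by hand. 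Either fix requires a genuine extra argument over a general locally compact ring; neither is implicit in ``passing to $C^{*}$-completions.''

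It is worth contrasting this with the paper's proof, which deliberately avoids both twisted $C^{*}$-algebras and rank-one projections. It introduces the auxiliary pair $(\widetilde U,\widetilde V)$ on $L^{2}(R^{2d},\Hi)$ by $F\mapsto U(a)F(\cdot+a,\cdot)$, $F\mapsto V(b)F(\cdot,\cdot+b)$, and shows by a pointwise conjugation (Proposition~\ref{Pro-Reg}, a Fell-absorption trick) that this pair is equivalent to $(U_{\rm reg}^{(n)},V_{\rm reg}^{(n)})$, while a second pointwise conjugation turns $(U^{(m)},V^{(m)})$ into a ``modulated'' copy $(\overline{U^{m}},\overline{V^{m}})$. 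The Fourier/Plancherel isomorphism of $L^{2}(R^{2d})$, applied coefficientwise, then intertwines $(\overline{U^{m}},\overline{V^{m}})$ and $(\widetilde U,\widetilde V)$ directly, giving $(U^{(m)},V^{(m)})\cong(U_{\rm reg}^{(n)},V_{\rm reg}^{(n)})$ without ever discussing an algebra of operators, its norm, or its ideal structure. This is where the paper's ``more elementary'' claim lives: the only hard input is Plancherel, and the comparison of an arbitrary pair to the regular pair is done entirely by explicit unitaries. Your route recovers the theorem and even the irreducibility statement in one pass, but only after the $C^{*}$-norm comparison is actually proved, which is where all the difficulty was in the classical treatments.
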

The following more familiar form of the Stone-von Neumann Theorem is an immediate 
consequence of Theorem~\ref{Theo2}.
\begin{corx}
\label{Cor} (\textbf{Stone-von Neumann Theorem-bis})
Let $R$ and $\la$ be as in Theorem~\ref{Theo2}. 
Let $(U,V)$ be a pair of unitary representations 
of  $R^d$ on a separable Hilbert space $\H$ satisfying  \ref{CCR-la}. 
Then there exist a subset $J$ of $\NN$,
an orthogonal decomposition $\H= \oplus_{j\in J}{\H_j}$ of $\H$ into closed 
$U(R^d)$- and $V(R^d)$-invariant subspaces $\H_j$   as well as  
 unitary isomorphisms $\Phi_j:  L^2(R^d)\to \H_j$
 such that 
 $$U(t)|_{\H_j} = \Phi_j U_{\text{Schr}}(a) \Phi_j^{*} \quad\text{and}\quad
 V(s)|_{\H_j} = \Phi_j V_{\text{Schr}}(b) \Phi_j^{*},$$
 for all $a,b \in R^d$ and all $j\in J.$
\end{corx}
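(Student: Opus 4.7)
The plan is to deduce the corollary from Theorem~\ref{Theo2} combined with the irreducibility of the Schr\"odinger pair. First, Theorem~\ref{Theo2} furnishes a unitary $\Psi: \H^{(\infty)} \to L^2(R^d)^{(\infty)}$ intertwining $(U^{(\infty)}, V^{(\infty)})$ with $(U_{\text{Schr}}^{(\infty)}, V_{\text{Schr}}^{(\infty)})$. Embedding $\H$ into $\H^{(\infty)}$ as the first summand, its image $W := \Psi(\H)$ is a closed invariant subspace of the Schr\"odinger inflation, and the restriction of the Schr\"odinger inflation to $W$ is unitarily equivalent via $\Psi$ to the pair $(U,V)$. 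It thus suffices to show that every closed invariant subspace of $(U_{\text{Schr}}^{(\infty)}, V_{\text{Schr}}^{(\infty)})$ decomposes orthogonally as $\oplus_{j \in J} L^2(R^d)$ for some $J \subseteq \NN$, with the inflation restricting on each summand to the Schr\"odinger pair.

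The central step is the irreducibility of the Schr\"odinger pair, i.e.\ that the von Neumann algebra generated by $U_{\text{Schr}}(R^d) \cup V_{\text{Schr}}(R^d)$ is all of $B(L^2(R^d))$. Let $T \in B(L^2(R^d))$ commute with every $V_{\text{Schr}}(b)$, $b \in R^d$. By hypothesis \ref{(Iso)}, the map $b \mapsto (x \mapsto \lambda(x \cdot b))$ exhausts the dual group $\widehat{R^d}$, so the operators $V_{\text{Schr}}(b)$ generate the maximal abelian subalgebra $L^\infty(R^d) \subset B(L^2(R^d))$ acting by multiplication; consequently $T$ is itself multiplication by some $f \in L^\infty(R^d)$. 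The additional requirement that $T$ commute with every translation $U_{\text{Schr}}(a)$ forces $f$ to be translation-invariant, hence constant almost everywhere, so $T \in \CCC I$.

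Irreducibility then implies that the commutant of $\{U_{\text{Schr}}^{(\infty)}(a), V_{\text{Schr}}^{(\infty)}(b) : a,b \in R^d\}$ on $L^2(R^d)^{(\infty)}$ consists of operators acting as ``scalar matrices'' on the index set, forming a copy of $B(\ell^2(\NN))$. Every closed invariant subspace of the inflation is then the range of some projection in this commutant; diagonalising that projection yields an orthonormal family $\{e_j : j \in J\}$ with $J \subseteq \NN$ together with an identification of the invariant subspace with $\oplus_{j\in J} L^2(R^d)$, on which the pair restricts to $(U_{\text{Schr}}^{(|J|)}, V_{\text{Schr}}^{(|J|)})$. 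Transporting this back through $\Psi^{-1}|_W$ produces the orthogonal decomposition $\H = \oplus_{j\in J} \H_j$ and the unitary isomorphisms $\Phi_j: L^2(R^d) \to \H_j$ demanded by the statement. The only substantive obstacle is the irreducibility argument itself, whose crux is the point-separation of characters ensured by \ref{(Iso)}; everything else is formal manipulation of inflations and commutants.
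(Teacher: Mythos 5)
Your argument is correct and follows essentially the same route as the paper: deduce the corollary from Theorem~\ref{Theo2} together with the irreducibility of the Schr\"odinger pair, the latter established by exactly the same commutant computation (Condition~\ref{(Iso)} gives all characters of $R^d$, so the $V_{\text{Schr}}(b)$ generate the maximal abelian algebra $L^\infty(R^d)$, and translation invariance then forces the commuting operator to be scalar). The only cosmetic difference is in the final bookkeeping: the paper extracts the components $T_j = P_j\circ T|_{\H_1}$ of the intertwiner and applies Schur's lemma to each, whereas you identify the commutant of the inflation with $B(\ell^2(\NN))$ and diagonalise the projection onto the invariant subspace --- both are standard and equivalent.
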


Here are examples of rings $R$   for which there exists  $\la\in \widehat{R}$
such that  Conditions~\ref{Sym}  and \ref{(Iso)} above hold (see Section~\ref{S:Examples} below;
for some of these rings, even every non trivial $\la$ satisfies these two conditions):
\begin{enumerate}
\item $R=\mathbf{k}$ is  a  local field $\mathbf{k},$ that is,  a locally compact non discrete field;
examples include $\mathbf{k}=\mathbf{R}$ and  $\mathbf{k}=\mathbf{Q}_p$ (the fields of $p$-adic 
numbers).
\item $R= \mathbf{A}$  is the ring of ad\`eles of $\QQ.$
\item $R=M_n(\mathbf{k})$ is the ring of $n\times n$-matrices over a  local field $\mathbf{k}.$
\item $R= \ZZ/n\ZZ$ is the  ring of integers modulo $n\geq 1.$
\end{enumerate}

A few words about the  proofs of the Stone-von Neumann Theorem are in order.
The original proof in the case of $R= \RRR$ is due  to von Neumann (\cite{VN});  his methods were
  later extended to cover other local fields  by Segal  \cite{Segal} and  Weil \cite{Weil}.
  In all these approaches, the  pair $(U,V)$ acting on $\H$ is ``integrated "
  to a representation of a certain algebra of functions on $R^d\times R^d$ 
  by an algebra of operators on $\H$ and
  one shows that  this latter  algebra contains sufficiently many rank-one operators.
  Mackey stated and proved a far-reaching generalization of  this theorem (\cite{Mackey-SVN});
  his proof uses a different approach which amounts to the classification  of so-called transitive systems of imprimitivity based on $R^d$  (in fact,  on an arbitrary second countable locally compact group).
 In comparison, our proof is more elementary; the basic idea is an imitation of  a trick,  called  Fell's  absorption principle (see  Proposition~\ref{Pro-Reg} below), and  the only sophisticated  tool we   use  is  Plancherel's theorem about the Fourier transform of  the abelian group $R^d.$
 Our approach confirms somehow the observation made by Howe~\cite[p.827]{Howe} 
 that the Stone-von Neumann and the Plancherel Theorems are equivalent.

Rings for which there exists no $\la\in \widehat{R}$
such that  Condition \ref{(Iso)} holds include the ring of integers $\ZZ$
and, more generally, every countable infinite ring with the discrete topology.
Theorem~\ref{Theo2} fails for such  rings.
Nevertheless, our approach  allows us   to prove
for them the following weaker version of the Stone-von Neumann Theorem.

We need to define a  second  equivalence relation on pairs of unitary representations
of $R^d$; this definition   is tailored after a similar  notion of approximately equivalent  representations of $C^*$-algebras  (see \cite[Definition 41.1]{Conway}).

\begin{definition}
\label{Def-AE}
 Two pairs  $(U_1,V_1)$ and $(U_2, V_2)$  of unitary representations
of $R^d$ acting on  respective   Hilbert  spaces $\H_1$
and $\H_2$ are  \emph{approximately equivalent} if there exists
 a sequence  $(\Phi_n)_{n}$ of unitary  isomorphisms $\Phi_n: \H_1\to \H_2$
 such that, for every compact subset $K$ of $R^d,$ we have
$$ \lim_{n\to \infty} \sup_{a\in K}\Vert U_2(a)-\Phi_n U_1(a)\Phi^*_n\Vert=0 \quad\text{and}\quad  
\lim_{n\to \infty}  \sup_{b\in K}
\Vert  V_2(b)- \Phi_nV_1(b)\Phi^*_n\Vert=0.$$
 \end{definition}

\begin{thmx}
\label{Theo1}
 Let  $R$ be  a  unital   second-countable locally compact ring. Let  $\lambda\in \widehat{R}$
  satisfying Condition~\ref{Sym} from Theorem~\ref{Theo2} and such that
\begin{equation}
\text{there is  no non-zero  two-sided ideal of} \quad R \quad\text{contained in} \quad \ker \la.
\tag*{(Faith)}\label{Dens}
\end{equation}
For an integer $d\geq 1,$ let $(U,V)$ be a pair of unitary representations 
of  $R^d$ on a separable Hilbert space $\H$ satisfying the commutation relation  \ref{CCR-la}.
 Then  $(U^{(\infty)},V^{(\infty)})$   is approximately equivalent
to  $(U_{\text{Schr}}^{(\infty)}, V_{\text{Schr}}^{(\infty)})$.

\end{thmx}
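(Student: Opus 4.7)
The plan is to run the same Fell-type absorption argument as in the proof of Theorem~\ref{Theo2}, and where that proof used \ref{(Iso)} to reach an exact unitary equivalence, to replace it by an approximation argument justified by the density of $\nabla_\lambda(R^d)$ in $\widehat{R^d}$, which is the key consequence of \ref{Dens}.

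\textbf{Step 1: Fell absorption.} Given the CCR pair $(U,V)$ on $\H$, form the auxiliary pair
\[
(\widetilde U, \widetilde V) := \bigl(U \otimes U_{\text{Schr}},\ I_\H \otimes V_{\text{Schr}}\bigr)
\]
on $\H \otimes L^2(R^d) \cong L^2(R^d, \H)$. A direct check shows that it satisfies \ref{CCR-la} (the factor $V_{\text{Schr}}$ only sees the $R^d$-variable and hence commutes with $U$ across the tensor). The unitary $(Mf)(x) = U(x) f(x)$ intertwines it with $(I_\H \otimes U_{\text{Schr}},\ I_\H \otimes V_{\text{Schr}}) = (U_{\text{Schr}}, V_{\text{Schr}}) \otimes I_\H$, so after $(\infty)$-inflation
\[
(\widetilde U^{(\infty)}, \widetilde V^{(\infty)}) \ \cong\ (U_{\text{Schr}}^{(\infty)}, V_{\text{Schr}}^{(\infty)}).
\]
This step requires neither \ref{Sym} nor \ref{Dens}. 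By transitivity of approximate equivalence it therefore suffices to show
\[
(U^{(\infty)}, V^{(\infty)}) \ \sim_{\mathrm{approx}}\ (\widetilde U^{(\infty)}, \widetilde V^{(\infty)}).
\]

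\textbf{Step 2: Spectral setup.} Apply Plancherel/SNAG to $V$ and to $\widetilde V = I_\H \otimes V_{\text{Schr}}$ to decompose them as direct integrals of characters of $R^d$ over $\widehat{R^d}$, via their spectral projection-valued measures $E_V$ and $E_{\widetilde V}$. The commutation \ref{CCR-la} translates into the covariance identity
\[
U(a) E_V(A) U(a)^{-1} = E_V\bigl(A \cdot \nabla_\lambda(a)^{-1}\bigr), \qquad A \subset \widehat{R^d} \text{ Borel},
\]
and analogously for $(\widetilde U, E_{\widetilde V})$. Condition~\ref{Dens} is equivalent to injectivity of $\nabla_\lambda$ and, by Pontryagin duality, forces $\nabla_\lambda(R^d)$ to be dense in $\widehat{R^d}$. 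Together with the covariance above this shows that the spectra of $V$ and of $\widetilde V$ both equal all of $\widehat{R^d}$, since they are closed non-empty $\nabla_\lambda(R^d)$-invariant subsets.

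\textbf{Step 3: Approximate intertwiners and main obstacle.} Take an exhaustion of $\widehat{R^d}$ by compact sets and, within each, a finite Borel partition $\{B_i\}$ of small $\nabla_\lambda$-diameter. Match $E_V(B_i)$ with $E_{\widetilde V}(B_i)$ using the fact that both measures have full support and that after the $(\infty)$-inflation every fibre becomes infinite-dimensional, which permits a unitary identification on each partition piece. The covariance identity then lets the $U$- and $\widetilde U$-actions be transported jointly by this identification: up to errors controlled by the mesh of the partition and by the residual discrepancy between $\nabla_\lambda(R^d)$ and $\widehat{R^d}$, both pairs become the same translation action. Refining the partition along the exhaustion yields the required sequence of unitaries $\Phi_n$. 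The hardest point is arranging \emph{a single} sequence $\Phi_n$ that simultaneously implements the approximate intertwining for both $U$ and $V$ on a prescribed compact set of parameters; the role of the $(\infty)$-inflation is precisely to absorb the measure-class mismatch that prevents the equivalence from being exact---this is how the weaker \ref{Dens} stands in for \ref{(Iso)}.
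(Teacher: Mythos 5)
Your Step 1 is a clean and correct variant of Fell absorption: $(U\otimes U_{\text{Schr}},\,I_\H\otimes V_{\text{Schr}})$ on $L^2(R^d,\H)$ does satisfy \ref{CCR-la} and the unitary $(Mf)(x)=U(x)f(x)$ does intertwine it with $(U_{\text{Schr}},V_{\text{Schr}})\otimes I_\H$; this is slightly more economical than the paper's ``regular pair'' on $L^2(R^d\times R^d)$. It correctly reduces the theorem to $(U^{(\infty)},V^{(\infty)})\sim_{\mathrm{approx}}(\widetilde U^{(\infty)},\widetilde V^{(\infty)})$.

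However, Steps 2--3 do not constitute a proof, and the missing ingredient is precisely the construction that the paper supplies. The plan of ``matching $E_V(B_i)$ with $E_{\widetilde V}(B_i)$'' and then asserting that ``the covariance identity lets the $U$- and $\widetilde U$-actions be transported jointly'' is where the argument stops: the unitaries identifying $E_{V^{(\infty)}}(B_i)\H^{(\infty)}$ with $E_{\widetilde V^{(\infty)}}(B_i)$ are highly non-canonical, and there is no reason an arbitrary choice approximately intertwines $U^{(\infty)}$ with $\widetilde U^{(\infty)}$. Worse, because $\nabla_\lambda(R^d)$ is typically a Haar-null dense subgroup of $\widehat{R^d}$, the spectral measure of $\widetilde V=I_\H\otimes V_{\text{Schr}}$ (pushforward of Haar under $\nabla_\lambda$) and that of $V$ can be mutually singular, so ``full support'' plus ``infinite fibres after inflation'' is not enough information to control how $U$ moves between the pieces. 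You flag this as the ``hardest point,'' and it is exactly the theorem. What the paper does differently is decisive: it keeps $(\widetilde U,\widetilde V)$ in the symmetric form on $L^2(R^d\times R^d,\H)$ (translating in $x$ for $\widetilde U$ and in $y$ for $\widetilde V$), so that after Fourier transforming on $\widehat{S}$, $\widehat{S}=\widehat{R^d}\times\widehat{R^d}$, both pairs sit on the same $L^2(\widehat{S},\H)$ and differ only by the explicit phases $\chi_1(a)$ and $\chi_2(b)$. The approximating unitary is then defined fibrewise as $\Phi(F)(\chi)=V(-a_i)U(b_i)F(\chi)$ for $\chi$ in a piece $\Omega_i$ of a partition of $\widehat{S}$ (Lemma~\ref{Lem-Partition}) chosen so that $\nabla_\lambda^{(2d)}(a_i,b_i)\in\Omega_i$. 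Conjugating $U(a)$ by $V(-a_i)$ produces the phase $\lambda(a_i\cdot a)$ and conjugating $V(b)$ by $U(b_i)$ produces $\lambda(b_i\cdot b)$, both via \ref{CCR-la}; these approximate $\chi_1(a)$ and $\chi_2(b)$ uniformly on $K$ by the density of $\nabla_\lambda^{(2d)}(S)$. This single fibrewise operator handles $U$ and $V$ \emph{simultaneously}, which is the key idea absent from your sketch. To complete your route you would need to reproduce this mechanism (or replace it by Voiculescu's theorem, which the paper deliberately avoids); note also that your Step 1 breaks the $x$/$y$-symmetry, so that under Fourier transform on $L^2(R^d,\H)$ the $V$-side appears as a translation by $\nabla_\lambda(b)$ rather than as a phase $\chi_2(b)V(b)$, and the phase-approximation trick no longer applies on the $V$-side.
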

Condition~\ref{Dens} is a weak faithfulness condition on $\la;$  provided $\la$ is non trivial, it holds for instance  when  $R$ is a simple ring.

As is nowadays well-known, an appropriate way to formulate the commutation relations 
is by means   of unitary representations of  a  Heisenberg group  associated to $R$
and $d$ (see  \cite[Chap.1, \S 5]{Folland} in the case $R= \mathbf{R}$).
We will recall the definition of this group and rephrase our results in terms of its
representations in Section~\ref{Heisenberg}.

\section{Proof of Theorem~\ref{Theo2} and   Theorem~\ref{Theo1}}
\label{S:ProofTheo}

\vskip.3cm
Let $\la$ be an arbitrary unitary character of $R.$
We  will use in a crucial way the following   pair $(U_{\rm reg}, V_{\rm reg})$  of unitary representations of $R^d$
satisfying \ref{CCR-la}, which we call the \emph{regular  pair} for reasons we will explain below (Remark~\ref{Rem-Reg}).
Set 
$$S:= R^d\times R^d$$
 and  consider the Hilbert space $L^2(S)= L^2(S, \mu\otimes \mu),$
where $\mu$ is a fixed Haar measure on $R^d.$
Let $U_{\rm reg}$ and  $V_{\rm reg}$ be defined on $L^2(S)$ by 
$$
(U_{\rm{reg}}(a)\xi)(x,y)= \xi(x+a,y) \quad\text{and} \quad (V_{\rm{reg}}(b) \xi)(x)= \lambda(x\cdot b)\xi(x,y+b)
$$
for all $\xi \in L^2(S)$ and $a,b, x, y\in R^d.$
One checks immediately that $(U_{\rm reg}, V_{\rm reg})$ satisfies the relation  \ref{CCR-la}.

Fix a   pair  $(U,V)$  of unitary representations 
of  $R^d$ on a  separable Hilbert space $\H$ satisfying the commutation relation  \ref{CCR-la}
Let $m,n\in \{1, 2, \hdots, \infty\}$
be the dimensions  of  the Hilbert spaces $L^2(S)$ and $\H$ respectively. 
The Hilbert space  $L^2(S, \H)= L^2(S, \H, \mu\otimes \mu)$
of (equivalence classes of) square-integrable measurable maps $S \to \H$ 
is isomorphic to $\H^{(m)}$ as well as to $L^2(S)^{(n)}$;
  the inflations $(U^{(m)}, V^{(m)})$ of $(U,V)$ and   $(U_{\rm reg}^{(n)}, V_{\rm reg}^{(n)})$
of  $(U_{\rm reg}, V_{\rm reg})$ are realized on $L^2(S, \H)$ by the formulae
$$
(U^{(m)}(a)F)(x,y)= U(a)(F(x,y) \quad\text{and} \quad (V^{(m)}(b) F)(x)=V(b)F(x,y)
$$
and 
$$
(U_{\rm{reg}}^{(n)}(a)F)(x,y)= F(x+a,y) \quad\text{and} \quad (V_{\rm{reg}}^{(n)}(b) F)(x)= \lambda(x\cdot b)F(x,y+b)
$$
for all $F \in L^2(S, \H)$ and $a,b, x, y\in R^d.$

We associate to $(U,V)$ another pair  $(\widetilde{U},\widetilde{V})$   of unitary representations  of  $R^d$ on   $L^2(S, \H)$
 defined by 
$$
(\widetilde{U}(a)F)(x,y)= U(a)(F(x+a,y)) \quad\text{and} \quad(\widetilde{V}(b)F)(x,y)= V(b)(F(x,y+b))
$$
for all $F \in L^2(S, \H)$ and $a,b, x, y\in R^d.$ Again, one checks immediately that $(\widetilde{U},\widetilde{V})$  satisfies the relation  \ref{CCR-la}.

So, we have three pairs of of unitary representations  of  $R^d$ acting on   $L^2(S, \H)$
and satisfying  \ref{CCR-la}: $(U^{(m)}, V^{(m)})$, $(\widetilde{U},\widetilde{V}),$
and $(U_{\rm reg}^{(n)}, V_{\rm reg}^{(n)})$; observe that, for fixed $n$,
the last one is independent of $(U,V).$

The following  proposition is the crucial tool we will use
in  the proofs of  both Theorem~\ref{Theo2} and Theorem~\ref{Theo1};
 Item (i) is reminiscent of a property of the regular representation of a group (see \cite[Corollary E.2.6]{BHV}),
often called Fell's  absorption principle (for the precise relationship with this principle,  see Section~\ref{Heisenberg}).
\begin{proposition}
\label{Pro-Reg}
\item[(i)] The pair $(\widetilde{U},\widetilde{V})$ is equivalent to the pair  $(U_{\rm reg}^{(n)}, V_{\rm reg}^{(n)})$.
\item[(ii)]   The pair $(U^{(m)}, V^{(m)})$ 
is equivalent to  the pair $(\overline{U^{m}}, \overline{V^{m}})$ given  on  $L^2(S, \H)$ by 
$$
(\overline{U^{m}}(a)F)(x,y)= \la(a\cdot x)U(a)(F(x,y)) \quad\text{and} \quad(\overline{V^{m}}(b)F)(x,y)= \la(y\cdot b)V(b)(F(x,y))
$$
for all $F \in L^2(S, \H)$ and $a,b, x, y\in R^d.$

\end{proposition}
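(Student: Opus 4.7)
The plan is to prove both parts by exhibiting explicit intertwining unitaries on $L^2(S,\H)$ of the form $F(x,y) \mapsto W(x,y) F(x,y)$, where $(x,y)\mapsto W(x,y)$ is a strongly continuous map from $S$ into the unitary group of $\H$, built algebraically from $U$ and $V$. Such a fiberwise multiplication operator is automatically unitary on $L^2(S,\H)$, so the task reduces to algebra using the additivity of $U$ and $V$ together with the single identity $U(a)V(b)=\lambda(a\cdot b)V(b)U(a)$ from \ref{CCR-la}.

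For part (i), I would take
$$(\Phi F)(x,y) := V(y)\,U(x)\,F(x,y),$$
with adjoint $(\Phi^* F)(x,y) = U(-x)V(-y)F(x,y)$. Conjugating $\widetilde{U}(a)$ is immediate: the chain $V(y)U(x)\cdot U(a)\cdot U(-(x+a))V(-y)$ telescopes to the identity, leaving the evaluation at $(x+a,y)$, namely $U_{\rm reg}^{(n)}(a)$. Conjugating $\widetilde{V}(b)$ uses one instance of \ref{CCR-la} in the form $U(x)V(b)=\lambda(x\cdot b)V(b)U(x)$; afterwards the remaining $U(\pm x)$ and $V(\pm(y+b))$ factors telescope, and the scalar $\lambda(x\cdot b)$ left behind is precisely the twist defining $V_{\rm reg}^{(n)}(b)$.

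For part (ii) the analogous candidate is
$$(\Psi F)(x,y) := V(-x)\,U(y)\,F(x,y).$$
In the conjugation of $U^{(m)}(a)$ the $U(\pm y)$ factors cancel at once, and a single use of \ref{CCR-la} applied to $V(-x)U(a)V(x)$ produces the phase $\lambda(a\cdot x)$ required by $\overline{U^m}(a)$. Symmetrically, in the conjugation of $V^{(m)}(b)$ the $V(\pm x)$ factors cancel and \ref{CCR-la} rewritten as $V(b)U(-y)=\lambda(y\cdot b)U(-y)V(b)$ produces the phase $\lambda(y\cdot b)$ needed for $\overline{V^m}(b)$.

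The main obstacle, if one can call it that, is spotting the correct multipliers: several superficially similar candidates ($U(x)V(y)$, $V(y)U(-x)$, or variants swapping the roles of $x$ and $y$) come close but leave an uncancelled factor or produce the wrong phase, e.g.\ $\lambda(-a\cdot y)$ in place of $\lambda(a\cdot x)$. Once $\Phi$ and $\Psi$ are pinned down, the CCR does all the algebraic work, and no analytic input beyond the strong continuity of $U$ and $V$ is required.
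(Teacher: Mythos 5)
Your proof is correct and follows essentially the same route as the paper: the paper's intertwiners are $\Phi(F)(x,y)=U(-x)V(-y)F(x,y)$ and $\Psi(F)(x,y)=V(-x)U(y)F(x,y)$, so your $\Phi$ is just the adjoint of the paper's (reversing the direction of the intertwining relation) and your $\Psi$ is identical, with the same single applications of \ref{CCR-la} producing the phases $\lambda(x\cdot b)$, $\lambda(a\cdot x)$ and $\lambda(y\cdot b)$.
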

\begin{proof}
(i)  Let $\Phi: L^2(S, \H)\to L^2(S, \H)$ be the  unitary isomorphism  defined by
$$
\Phi(F)(x,y)= U(-x)V(-y) (F(x,y)) \tout F \in L^2(S, \H),  (x, y)\in S.
$$ 
 For $a,b\in R^d,$ we have
$$
\begin{aligned}
\left((\widetilde{U}(a) \Phi)(F)\right)(x,y)&= U(a)(\Phi(F)(x+a,y))\\
&=U(a)U(-x-a) V(-y) (F(x+a,y))\\
&= U(-x)V(-y)(F(x+a,y))\\
&= \Phi(U_{\rm{reg}}^{(n)}(a)F)(x,y))\\
\end{aligned}
$$
and, similarly,
$$
\begin{aligned}
\left((\widetilde{V}(b) \Phi)(F)\right)(x,y)&= \Phi(V_{\rm{reg}}^{(n)}(b)F)(x,y)).
\end{aligned}
$$
 
 \noindent
(ii)  Let $\Psi: L^2(S, \H)\to L^2(S, \H)$ be the  unitary isomorphism  defined by
$$
\Psi(F)(x,y)= V(-x)U(y) (F(x,y)) \tout F \in L^2(S, \H),  (x, y)\in S.
$$ 
For $a,b\in R^d,$ we have
$$
\begin{aligned}
 \Psi(U^{(m)}(a)F)(x,y)&= V(-x)U(y)U(a)(F(x,y))\\
&= V(-x)U(a)U(y)(F(x,y))\\
&= \la(a\cdot x) U(a)(V(-x)U(y)(F(x,y))\\
&= \left(\overline{U^{m}}(a)(\Psi(F))\right)(x,y)\\
\end{aligned}
$$
 and, similarly, 
 $$
\begin{aligned}
 \Psi(V^{(m)}(b)F)(x,y)&= \left(\overline{V^{m}}(b)(\Psi(F))\right)(x,y).
\end{aligned}
 $$
 \end{proof}

\subsection{Proof of Theorem~\ref{Theo2}}
\label{SS: ProofTheo2}
We assume that $\lambda$ satisfies Conditions ~\ref{Sym} and  \ref{(Iso)}.
 Since $\widehat{S}$ is,  in the canonical  way, topologically
isomorphic to  $\widehat{ R}^{2d},$   the map
$$
\nabla_\lambda^{(2d)}:S\mapsto \widehat{S},\, (a,b) \mapsto ((x,y) \mapsto \la(x\cdot a)\la(y\cdot b))
$$
is an  isomorphism of topological groups.
We can therefore identify the (inverse) Fourier transform  of $S$ with the map $\mathcal{F}:L^1(S)\to C(S)$
given by 
$$
\mathcal{F}(f)(x,y) = \int_S f(t,s) (\nabla_\lambda^{(2d)}(x,y))(t,s) d\mu(t) d\mu(s)=\int_S f(t,s)  \la(t\cdot x)\la(s\cdot y) d\mu(t) d\mu(s)
$$
for $f\in L^1(S).$    Upon properly normalizing $\mathcal{F},$ we  can assume by Plancherel's theorem (see  \cite[Chap.II, \S 1]{Bourbaki-TS})) that 
$\mathcal{F}$ extends from $L^1(S)\cap L^2(S)$ to a bijective isometry
$\mathcal{F}:L^2(S)\to L^2(S).$

We claim that  $\mathcal{F}$ induces an equivalence between the pairs  $(\overline{U^{m}}, \overline{V^{m}})$ 
and $(\widetilde{U},\widetilde{V})$ from Proposition~\ref{Pro-Reg}.
Indeed, let $\widetilde{\mathcal{F}}: L^2(S, \H)\to L^2(S,\H)$ be given by 
$$
\widetilde{\mathcal{F}}(F)(x,y)= \int_S  \la(t\cdot x)\la(s\cdot y) F(t,s) d\mu(t) d\mu(s)
$$
for $F\in L^2(S, \H);$ more precisely,  $\widetilde{\mathcal{F}}$ is defined on the linear span $X$ of 
$\{f\otimes v\, :\,  f\in L^2(S), v\in \H\}$, where $(f\otimes v)(x,y)= f(x,y) v$,
by 
$$
\widetilde{\mathcal{F}}(f\otimes v)=  \mathcal{F}(f) v;
$$
 since $\widetilde{\mathcal{F}}$  is  isometric on $X$ and has dense image, it  extends to a bijective isometry
from $L^2(S, \H)$ to  $L^2(S,\H)$.
 For $a,b\in R^d$ and $F\in L^2(S, \H),$ we have, using Condition \ref{Sym},
$$
\begin{aligned}
\left(\widetilde{U}(a)\widetilde{\mathcal{F}}(F)\right)(x,y)&= U(a)(\widetilde{\mathcal{F}}(F)(x+a,y))\\
&=U(a)\left(\int_S   \la(t\cdot (x+a))\la(s\cdot y) F(t,s)d\mu(t) d\mu(s)\right)\\
&= \int_S \la(t\cdot a)   \la(t\cdot x)\la(s\cdot y)) U(a)(F(t,s)) d\mu(t) d\mu(s)\\
&=  \int_S \la(a\cdot t)   \la(t\cdot x)\la(s\cdot y)) U(a)(F(t,s)) d\mu(t) d\mu(s)\\
&= \int_S  \la(t\cdot x) \la(s\cdot y)  (\overline{U^{m}}(a)F)(t,s) d\mu(t) d\mu(s)\\
&=\widetilde{\mathcal{F}} (\overline{U^{m}}(a)F)(x,y)\\
\end{aligned}
$$
and, similarly,
$$
\begin{aligned}
\left(\widetilde{V}(b)\widetilde{\mathcal{F}}(F)\right)(x,y)&= \widetilde{\mathcal{F}} (\overline{V^{m}}(b)F)(x,y).
\end{aligned}
$$
This concludes the proof of  Theorem~\ref{Theo2}; indeed,  denoting by $\sim$ the equivalence of pairs  of unitary representations of $R^d$ on separable Hilbert space $\H$ satisfying  \ref{CCR-la}, 
Proposition ~\ref{Pro-Reg} shows that
$$(U^{(m)}, V^{m})\sim (\overline{U^{m}}, \overline{V^{m}}) \quad\text{and} \quad (\widetilde{U},\widetilde{V}) \sim (U_{\rm reg}^{(n)}, V_{\rm reg}^{(n)}),$$
where $m=\dim S, n=\dim (\H)$; moreover, we  have just proved that 
$$(\overline{U^{m}}, \overline{V^{m}}) \sim (\widetilde{U},\widetilde{V});$$
so $(U^{(m)}, V^{(m)})\sim (U_{\rm reg}^{(n)}, V_{\rm reg}^{(n)})$ and this implies
that $(U^{(\infty)}, V^{(\infty)})\sim (U_{\rm reg}^{(\infty)}, V_{\rm reg}^{(\infty)})$.
As $(U_{\rm reg}, V_{\rm reg})$ does not depend on $(U,V)$, we have therefore
$$(U^{(\infty)},V^{(\infty)}) \sim (U_{\text{Schr}}^{(\infty)}, V_{\text{Schr}}^{(\infty)}).$$

\subsection{Proof of Corollary~\ref{Cor}}
\label{SS:ProofCor}
First, observe that the pair $(U_{\text{Schr}}, V_{\text{Schr}})$
is irreducible (that is, there is no non trivial closed subspace of $L^2(R^d)$
which is simultaneously invariant under $U(R^d)$ and $V(R^d)$).
Indeed, let 
$T: L^2(R^d) \to L^2(R^d)$ be a bounded operator which commutes with all 
$U_{\text{Schr}}(a)$'s and all $V_{\text{Schr}}(b)$'s.
 Since $T$ commutes with all $V_{\text{Schr}}(b)$'s and
since $\nabla_\la^{(d)}(R^d)= \widehat{R^d}$ is weak*-dense in $L^\infty(R^d),$ the operator $T$ commutes
with every operator $m_\vfi: L^2(R^d) \to L^2(R^d)$ given by multiplication 
with a function $\vfi\in L^\infty(R^d).$ As is well-known (see e.g. \cite[1.H.6]{BH}),
this implies that $T$ is itself of the form $T=m_\vfi$ for some $\vfi\in L^\infty(R^d).$
 Since $T$ commutes with all $U_{\text{Schr}}(a)$'s, the function $\vfi$ is invariant by 
 the translations $x\mapsto x+a.$ So, $\vfi$
 is constant and  $T$ is a multiple of the identity. Schur's lemma shows that $(U_{\text{Schr}}, V_{\text{Schr}})$
is irreducible.
 
 Next,   let $(U,V)$ be a pair of  unitary representations of $R^d$ on separable Hilbert space $\H,$
satisfying \ref{CCR-la}.  Then, by Theorem~\ref{Theo2}, there exists a unitary isomorphism 
 $$T: \oplus_{i\in \NN} \H_i\to \oplus_{j\in \NN} \K_j,$$
 which intertwines $(U^{(\infty)},V^{(\infty)})$ and  $(U_{\text{Schr}}^{(\infty)}, V_{\text{Schr}}^{(\infty)}),$
 where the $\H_i$'s are copies of $\H$ and the $\K_j$'s are copies of $L^2(R^d).$
 Let $J$ be the set of all $j\in \NN$ for which the map
 $$T_j:= P_j \circ T|_{\H_1}: \H_1\to \K_j$$ 
 is non zero.
 Since $T_j$  intertwines $(U,V)$ and  $(U_{\text{Schr}}, V_{\text{Schr}})$
 and since $(U_{\text{Schr}}, V_{\text{Schr}})$ is irreducible,
 the map $\oplus_{j\in J} T_j: \H_1\to \oplus_{j\in J} \K_j$
 is a unitary isomorphism and sets up an equivalence between
 $(U,V)$ and $(U_{\text{Schr}}^{(|J|)}, V_{\text{Schr}}^{(|J|)}).$

\subsection{A lemma on partitions}
\label{SS:Partition}
 For the proof of Theorem~\ref{Theo1}, we will need  the following general   lemma about partitions.
 
  \begin{lemma}
 \label{Lem-Partition}
  Let $G$ be a second-countable locally compact group
and let $U$ be a compact neighbourhood of the identity in $G$ 
 with $U^{-1}= U.$
We can find  a partition  of $G= \coprod_{i\geq 1}\Omega_i$
 in countably many Borel subsets $\Omega_i^{(n)}$ with the following properties:
 \begin{itemize}
 \item[(P1)] For every $i\geq 1$ and any elements $x,y$ in $\Omega_i,$ we have 
 $y^{-1}x\in U;$
 \item[(P2)] $\Omega_i$ is contained in the closure of the interior ${\rm Int}(\Omega_i)$
 of $\Omega_i,$ for  every $i\geq 1; $ 
 \end{itemize}
  \end{lemma}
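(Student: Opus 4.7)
The plan is to cover $G$ by countably many right translates of a sufficiently small symmetric neighbourhood of the identity and then refine this cover to a partition by a simple ordering-and-subtracting procedure, using \emph{closures} of the translates (not the open sets themselves) in the subtraction in order to secure the regularity condition~(P2).

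First, using continuity of multiplication at $(e,e)$ and local compactness, choose an open, symmetric, precompact neighbourhood $V$ of $e$ with $\overline{V}\,\overline{V}\subseteq U$: take a precompact symmetric $W$ with $\overline{W}\subseteq U$, and then a symmetric $V$ with $V^{2}\subseteq W$, so that $\overline{V}^{\,2}\subseteq \overline{V^{2}}\subseteq\overline{W}\subseteq U$. Since $G$ is $\sigma$-compact (being second-countable and locally compact), choose a sequence $(g_{n})_{n\geq 1}$ in $G$ with $G=\bigcup_{n}g_{n}V$, set $A_{n}:=g_{n}V$, and define
\[
\Omega_{n}\;:=\;\overline{A_{n}}\setminus\bigcup_{k<n}\overline{A_{k}}.
\]
Each $\Omega_{n}$ is Borel (closed minus $F_{\sigma}$), the family is pairwise disjoint by construction, and covers $G$ since any $x\in G$ lies in $A_{n}\subseteq \overline{A_{n}}$ for some $n$.

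For (P1), note that $\Omega_{n}\subseteq \overline{A_{n}}=g_{n}\overline{V}$, so for $x,y\in \Omega_{n}$ one has $y^{-1}x\in \overline{V}^{-1}\overline{V}=\overline{V}\,\overline{V}\subseteq U$, using that $\overline{V}$ is symmetric because $V$ is. For (P2), fix $x\in \Omega_{n}$. Since $x\notin \bigcup_{k<n}\overline{A_{k}}$ and this set is closed, there is an open neighbourhood $N_{0}$ of $x$ disjoint from it; since $x\in \overline{A_{n}}$, every open neighbourhood $N$ of $x$ meets $A_{n}$. Then $N\cap N_{0}\cap A_{n}$ is a nonempty open set contained in $A_{n}\setminus \bigcup_{k<n}\overline{A_{k}}$, which is open and lies in $\Omega_{n}$, hence in $\operatorname{Int}(\Omega_{n})$. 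Thus every neighbourhood of $x$ meets $\operatorname{Int}(\Omega_{n})$, so $x\in \overline{\operatorname{Int}(\Omega_{n})}$.

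The main obstacle is (P2). The naive construction $\Omega_{n}=A_{n}\setminus \bigcup_{k<n}A_{k}$ satisfies (P1) and yields a Borel partition, but can fail (P2): a boundary point of some earlier $A_{k}$ that happens to lie in $\Omega_{n}$ has every neighbourhood meeting $A_{k}$, hence no neighbourhood contained in $\Omega_{n}$, with nothing to guarantee it is a limit of points in $\operatorname{Int}(\Omega_{n})$. Subtracting the \emph{closures} $\overline{A_{k}}$ is the crucial fix: it pushes the problematic boundary points into earlier pieces $\Omega_{k}$ and leaves $\Omega_{n}$ with an interior large enough to approximate each of its points, as the argument above makes precise.
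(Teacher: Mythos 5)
Your proof is correct and takes essentially the same approach as the paper: cover $G$ by countably many translates $g_n V$ of a small symmetric open set, take the pieces $\Omega_n = g_n\overline{V}\setminus\bigcup_{k<n}g_k\overline{V}$, get (P1) from $\overline{V}^{\,2}\subseteq U$, and get (P2) by observing that the open set $g_nV\setminus\bigcup_{k<n}g_k\overline{V}$ is dense in $\Omega_n$. The only cosmetic differences are that the paper phrases the construction recursively (subtracting the earlier $\Omega_j$'s, then noting $\bigcup_{j<i}\Omega_j=\bigcup_{j<i}d_j\overline{V}$, which recovers your definition) and obtains the countable cover from a countable dense subset rather than from $\sigma$-compactness.
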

  \begin{proof}
  Since $G$  is second-countable, $G$ contains a  countable dense subset $D=\{d_1, d_2, \cdots\}$.
    Let $V$ be an  open  neighbourhood of  the identity with $V^{-1}= V$ and $V^2\subset U.$
 Let  $g\in G;$  since $gV$ is a neighbourhood  of $g$, we have
  $gV\cap D\neq \emptyset$, that is, $g\in DV^{-1}= DV.$ Therefore, we have $G= \bigcup_{i\geq 1} d_iV.$
Set $\Omega_{1}:= d_1\overline{V}$ and, for  define recursively  $\Omega_{i}$ by 
 $$
 \Omega_{i+1}= d_{i+1}\overline{V}\setminus \bigcup_{1\leq j\leq i} \Omega_{j} \tout i\geq 1.
 $$
  It is obvious that $G= \bigcup_{i}\Omega_i$ is a partition  of $G;$
 moreover, for $x,y \in \Omega_i,$ we have 
 $$y^{-1}x \in \overline{V}^{-1} \overline{V}= \overline{V}^2\subset \overline{U}= U$$ and so Condition (P1) is satisfied.
 
 We claim that Condition (P2) is also satisfied. Indeed, 
  ${\rm Int}(\Omega_1)= d_1V$ is dense in $\Omega_{1}.$
 Moreover, for every $i\geq 2,$
 $$\bigcup_{1\leq j\leq i-1} \Omega_{j}= \bigcup_{1\leq j\leq i-1} d_{j} \overline{V}$$
 is closed in $G$; therefore, the set $d_i V \setminus \bigcup_{1\leq j\leq i-1} \Omega_{j}$
 is open in $G$ and  dense in  $ \Omega_{i}.$
 
  \end{proof}
   \subsection{Proof of Theorem~\ref{Theo1}}
\label{SS: ProofTheo1}
 When Condition~\ref{(Iso)} from Theorem~\ref{Theo2} is not satisfied, 
the pairs $(\overline{U^{m}}, \overline{V^{m}})$ and  $(\widetilde{U},\widetilde{V})$ from Proposition~\ref{Pro-Reg} are no longer equivalent. 
We will first   realize    $(\widetilde{U},\widetilde{V})$ on $L^2(\widehat{S}, \H)$ 
 by means of the Fourier transform.
Let $\nu$ be the Haar measure on $\widehat{S}$ for which Plancherel's theorem holds,
that is, for which the  Fourier transform
$\mathcal{F}: L^2(S, \mu\otimes \mu) \to L^2(\widehat{S}, \nu),$ defined by 
$$
\mathcal{F}(f) (\chi)= \int_{S}  \overline{\chi(x,y)} f(x,y)d\mu(x) d\mu(y) \tout \chi\in \widehat{S},
$$
is an isometry. We extend $\mathcal{F}$ to an isometry 
$ L^2(S, \H)\to L^2(\widehat{S},\H),$ which we again denote by $\mathcal{F}$,  by 
$$
\mathcal{F}(F)(\chi)= \int_S \overline{\chi(x,y)} F(x,y) d\mu(x) d\mu(y) \tout F\in L^2(S,\H), \, \chi\in \widehat{S}.
$$
The inverse $\mathcal{F}^{-1}$ of $\mathcal{F}$ is given by 
$$
\mathcal{F}(H)(x,y)= \int_{\widehat{S}} \chi(x,y) H(\chi) d\nu(\chi) \tout H\in L^2(\widehat{S},\H),\, (x,y)\in S.
$$
We transfer the pair  $(\widetilde{U},\widetilde{V}))$ 
to the  pair $(\mathcal{F}(\widetilde{U}),\mathcal{F}(\widetilde{V}))$
on $L^2(\widehat{S},\H)$ given by 
$$\mathcal{F}(\widetilde{U})(a)= \mathcal{F}\widetilde{U}(a) \mathcal{F}^{-1}
\quad\text{and} \quad \mathcal{F}(\widetilde{V})(b)= \mathcal{F}\widetilde{V}(b) \mathcal{F}^{-1}.
$$
Using the translation invariance of $\mu$,  one checks that 
$$
\leqno{(*)}\left\{
\begin{aligned}
\mathcal{F}(\widetilde{U})(a)(H)(\chi_1, \chi_2)&= \chi_1(a) U(a)(H(\chi_1, \chi_2))\\
\mathcal{F}(\widetilde{V})(b)(H)(\chi_1, \chi_2)&= \chi_2(b) V(b)(H(\chi_1, \chi_2))
\end{aligned}
\right.
$$
for all $H \in L^2(\widehat{S}, \H), (a,b)\in S,$ and $(\chi_1, \chi_2)\in \widehat{S}.$

The pair $(U^{(m)},V^{(m)})$  is realized on $L^2(\widehat{S}, \H)\cong \H^{(m)}$ by 
$$
\leqno{(**)}\left\{
\begin{aligned}
U^{m}(a)(H)(\chi_1, \chi_2)&= U(a)(H(\chi_1, \chi_2))\\
V^{m}(b)(H)(\chi_1, \chi_2)&= V(b)(H(\chi_1, \chi_2)).
\end{aligned}
\right.
$$

Recall that $\widehat{S}$, equipped with the topology of uniform convergence on compact subsets
of $S$, is a locally compact group. Moreover, $\widehat{S}$,
 is second-countable, since $R$  is assumed to be second-countable (see e.g. \cite[A.7.3]{BH}).

We claim that  the image of
$\nabla_\lambda^{(2d)}: S \mapsto \widehat{S}$ is dense in $ \widehat{S}.$
Indeed, let $(x,y) \in S$ be such that $\chi(x,y)=1$ for all 
$\chi \in \nabla_\lambda^{(2d)}(S),$ that is, such that
$\la(x\cdot a)\la(y\cdot b)=1$ and hence $\la(x\cdot a)=\la(y\cdot b)=1$ 
 for all $a,b\in R^d.$ By Conditions \ref{Sym} and \ref{Dens}, it follows that
$x=y=0. $ Pontrjagin's duality implies  then that 
the subgroup $\nabla_\lambda^{(2d)}(S)$ is dense in $ \widehat{S}.$

We can now proceed with the proof of Theorem~\ref{Theo1}.
Fix a compact subset $K$ of $R^d$ and a real number $\eps>0.$
We are going to construct a unitary isomorphism $\Phi:L^2(\widehat{S},\H) \to L^2(\widehat{S},\H)$
such that 
$$\sup_{a\in K}\Vert \Phi \mathcal{F}(\widetilde{U})(a)\Phi^{-1} - U^{m}(a) \Vert \leq \eps
\quad\text{and}\quad \sup_{b\in K}\Vert \Phi \mathcal{F}(\widetilde{V})(b)\Phi^{-1} - V^{m}(b) \Vert \leq \eps.
$$
Set $C:= K\times K$ and consider the compact neighbourhood $U$  of $1_S$ (the identity element in  $\widehat{S}$) defined by 
$$
U= \{\chi\in \widehat{S} : \sup_{s\in C} | \chi(s)-1|\leq \eps\}.
$$
Let $\widehat{S}=\coprod_{i\geq 1}\Omega_i$
be a partition   of $\widehat{S}$ in countably many Borel subsets $\Omega_i$
satisfying Conditions (P1) and (P2)  from Lemma~\ref{Lem-Partition}.

Since $\nabla_\lambda^{(2d)}(S)$ is dense in $ \widehat{S}$, it follows 
from  Condition (P2) that 
we can find,  for every $i\geq 1,$ an element  $s_i\in S$ with 
$\nabla_\lambda^{(2d)}(s_i)\in \Omega_i.$
Let $a_i, b_i\in R^d$ be such that  $s_i= (a_i, b_i).$

We define  an    operator 
$$\Phi: L^2(\widehat{S},\H) \to   L^2(\widehat{S},\H)$$
as follows. For $F\in L^2(\widehat{S},\H),$  set
$$
\Phi(F)(\chi)= V(-a_i)U( b_i) (F(\chi)) \tout i\geq 1   \quad\text{and} \quad \chi \in  \Omega_i.
$$
We have
$$
\begin{aligned}
\Vert \Phi(F)\Vert^2 &= \sum_{i\geq 1} \int_{\Omega_i} \Vert V(-a_i)U( b_i) (F(\chi))\Vert^2 d\nu(\chi)\\
&=  \sum_{i\geq 1} \int_{\Omega_i} \Vert F(\chi)\Vert^2 d\nu(\chi)\\
&=\Vert F\Vert^2.
\end{aligned}
$$
Therefore, $\Phi$ is an isometry. Moreover,  $\Phi$ is bijective 
with inverse  defined   by  
$$\Phi^{-1}(F)(\chi)= U(-b_i)V(a_i) (F(\chi))  \tout i\geq 1   \quad\text{and} \quad \chi \in  \Omega_i.$$
So, $\Phi:L^2(\widehat{S},\H)\to L^2(\widehat{S},\H)$ is a unitary isomorphism.
 
Let $s=(a,b)\in S$ and $F\in L^2(\widehat{S},\H).$
 For $i\geq 1$ and $\chi= (\chi_1, \chi_2) \in  \Omega_i,$ on the one hand,
 we have, in view of  Formula $(*)$ for the pair
 $(\mathcal{F}(\widetilde{U}), \mathcal{F}(\widetilde{V}):$
$$
\begin{aligned}
\mathcal{F}(\widetilde{U})(a)(\Phi(F))(\chi)&= \chi_1(a) U(a)(\Phi(F)(\chi))\\
&= \chi_1(a) U(a)(\Phi(F)(\chi)) \\
\end{aligned}
$$
 and similarly 
 $$
\begin{aligned}
\mathcal{F}(\widetilde{V})(b)(\Phi(F))(\chi)&= \chi_2(b) V(b)(\Phi(F)(\chi))\\
&= \chi_2(b) V(b)(\Phi(F)(\chi)); \\
\end{aligned}
$$
on the other  hand,  we have, in view of  Formula $(**)$ for the pair
 $(U^{(m)},V^{(m)}):$ 
$$
\begin{aligned}
\left( \Phi(U^{(m)} (a)F\right)(\chi)&=  \Phi(U(a)(F(\chi))\\
&=V(-a_i)U( b_i)U(a)(F(\chi))\\
&= V(-a_i)U(a)U(b_i)(F(x,y))\\
&= \la(a\cdot a_i) U(a)(V(-a_i)U(b_i)(F(\chi))\\
&= \la(a_i\cdot a) U(a)(\Phi(F(\chi))\\
\end{aligned}
$$
 and similarly 
 $$
\begin{aligned}
\left( \Phi(V^{(m)} (b)F\right)(\chi)&=  \Phi(V(b)(F(\chi))\\
&=V(-a_i)U( b_i)V(b)(F(\chi))\\
&= \la(b_i\cdot b)  V(b)\Phi(F(\chi)).
\end{aligned}
 $$
 Now, by the choice of $s_i=(a_i, b_i),$ we have
$\nabla_\lambda^{(2d)}(s_i)\in \Omega_i$; hence, by Condition (P1),  we have
$\chi \in \nabla_\lambda^{(2)}(s_i)U,$ that is,
 $$
  \sup_{(a,b)\in C} \left| \chi(a,b)- \nabla_\lambda^{(2)}(s_i)(a,b)\right|\leq \eps
  $$
and consequently, using Condition \ref{Sym},
$$\sup_{a\in K} \left| \chi_1(a)-  \la(a_i\cdot a)\right|\leq \eps\quad\text{and} \quad
\sup_{b\in K} \left| \chi_2(b)-  \la(b_i\cdot b)\right|\leq \eps.
$$
We have therefore, for $a\in K,$
$$
\begin{aligned}
\Vert \mathcal{F}(\widetilde{U})(a)\Phi(F)-\Phi (U^{(m)} (a)F) \Vert^2
 &= \sum_{i\geq 1}  \int_{ \Omega_{i}}  |\chi_1(a)- \la(a_i\cdot a)|^2\Vert U(a)(\Phi(F)(\chi))\Vert^2 d\nu(\chi)\\
&=  \sum_{i\geq 1}  \int_{ \Omega_{i}}  |\chi_1(a)- \la(a_i\cdot a)|^2\Vert \Phi(F)(\chi)\Vert^2 d\nu(\chi)\\
&\leq \eps^2 \sum_{i\geq 1}  \int_{ \Omega_{i}} \Vert \Phi(F)(\chi)\Vert^2 d\nu(\chi)=\eps^2 \Vert \Phi(F)\Vert^2\\
&=\eps^2 \Vert F\Vert^2
\end{aligned}
$$
 and, similarly, for $b\in K,$
 $$
\begin{aligned}
\Vert \mathcal{F}(\widetilde{V})(b)\Phi(F)-\Phi (V^{(m)} (b)F) \Vert^2
&\leq \eps^2 \Vert F\Vert^2.
\end{aligned}
$$
So,  $ (U^{(m)}, V^{(m)})$ is approximately equivalent to $(\mathcal{F}(\widetilde{U}),\mathcal{F}(\widetilde{V})),$  
which is obviously equivalent to $(\widetilde{U}, \widetilde{V}))$;
since, by Proposition~\ref{Pro-Reg}, $(\widetilde{U}, \widetilde{V}))$ is equivalent to 
 $(U_{\rm reg}^{(n)}, V_{\rm reg}^{(n)}),$ 
this concludes the proof of Theorem~\ref{Theo1}.
 
  \section{Examples}
\label{S:Examples}
We give a few examples of locally compact rings $R$ and 
unitary characters $\la$ in connection with the various  Conditions~\ref{(Iso)},
\ref{Sym}, and ~\ref{Dens}. 
\begin{enumerate}
 \item Let $\mathbf{k}$ be a  locally compact field,
 that is,  a non-discrete locally compact topological field; it is well-known 
 (see e.g.  \cite[Chap VI, \S 9]{Bourbaki-AC}) that  $\mathbf{k}$ is topologically isomorphic to one of the following fields
  and  that its topology is defined by the corresponding absolute value:
  \begin{itemize}
 \item $\RRR$ or $\CCC$ with the usual absolute
value $|\cdot|.$
\item  a finite extension of the field of $p$-adic numbers
$\Q_p$ with an extension of the $p$-adic absolute value $|\cdot|_p.$
\item the field $\mathbf{F}((X))$  of Laurent series
over a finite field $\mathbf{F}$ with absolute value
$|\sum_{i=m}^{\infty} a_iX^i|= e^{-m}$ with 
$a_m\in \mathbf{F}\setminus \{0\}.$
\end{itemize}

Let $\la\in\widehat{\mathbf{k}}$ be a non trivial character of the additive group of  $\mathbf{k}.$ 
The  map 
$$\nabla_\lambda: \mathbf{k}\mapsto \widehat{\mathbf{k}}, \, a\mapsto (x\mapsto \la(ax))
$$
is an isomorphism of topological groups (see \cite[Chap. II, Theorem 3]{Weil-Book} for a more general result); for the convenience of the reader,  here is a  direct  proof. 
The kernel of $\nabla_\lambda$ is an ideal of $\mathbf{k}$
and hence is trivial, since $\la\neq 1_{\mathbf{k}}.$
By Pontrjagin duality, the image of $\nabla_\lambda$ is a dense subgroup $H$ of $\widehat{\mathbf{k}^d}$
(see the argument  at the beginning of  Subsection~\ref{SS: ProofTheo1}).
We claim that the map $\nabla_\lambda,$ which is obviously continuous, is in fact a homeomorphism between
$\mathbf{k}$ and $H$.  Indeed, assume, by contradiction,
that $\nabla_\lambda^{-1}: H\to \mathbf{k}$ is not continuous. Then, there exists a sequence $(a_n)_n$ in $\mathbf{k}$
and a number $\delta>0$  with $|a_n|>\delta$ for all $n$ such that $\lim_n\nabla_\lambda(a_n)(a)= 1$
uniformly on bounded subsets of $\mathbf{k}$. Since $\la$ is non trivial,
we can find $a\in \mathbf{k}$ with $\la(a)\neq 1.$ 
However, since  $| a a_n^{-1}|\leq \dfrac{|a|}{\delta},$ 
we have
 $$
\la(a)= \la (aa_n^{-1} a_n)= \lim_n\la (aa_n^{-1} a_n)= \lim_n\nabla_\lambda(a_n)(aa_n^{-1})=1
$$
and this is a contradiction. So, $\nabla_\lambda^{-1}$ is continuous
and hence  $H$ is  locally compact; this  implies that it  is a closed
subgroup of $\widehat{\mathbf{k}}$ and, since it is also dense, we have $H= \widehat{\mathbf{k}}.$

\item Let $R= \mathbf{A}$ be the ring of ad\`eles of  $\QQ$.
Recall that   $\mathbf{A}$   is the restricted product  
$\mathbf{A}= \RRR\times \prod_{p\in \P} (\QQ_p, \ZZ_p)$
relative to the subgroups $\ZZ_p$; thus, 
$$\mathbf{A}= \left\{(x_\infty, (x_p)_{p\in \P}) \in \RRR\times \prod_{p\in \P} \QQ_p\, :\, x_p \in \ZZ_p \text{ for almost every } p\in \P \right\},$$
where  $\P$ be the set of primes numbers, $\QQ_p$ is the field of  $p$-adic numbers, 
and $\ZZ_p=\{ x\in \QQ_p \, :  |x|_p\leq 1\}$ is the compact open
subring of $p$-adic integers.

For every finite subset $F\subset \P,$ 
$$\mathbf{A}(F)=  \RRR\times \prod_{p\in F} \QQ_p \times \prod_{p\notin F} \ZZ_p,$$
equipped with the product topology, is  a locally compact ring.
A  ring topology  is defined on $\AA$ on prescribing that 
every  $\mathbf{A}(F)$ is an open subring of $\AA.$  
With this topology, $\AA$ is a locally compact ring.

Let $\la$ be a nontrivial unitary character  of $\AA$.
 An example of  such a character can be obtained as follows.
 Let $\la_\infty \in\widehat{\RRR}$ be given by 
  $\lambda_\infty(x)= e^{-2\pi i x};$  for every 
  $p\in \P,$ let $\la_p\in \widehat{\QQ_p}$ be given by 
 $\la_p(x)= \exp(2\pi i  \{x\}),$ where $\{x\}= \sum_{j=m}^{-1} a_j p^j $ denotes the ``fractional part" of a 
 $p$-adic number $x= \sum_{j=m}^\infty a_j p^j$ for integers $m\in \ZZ$ and $a_j \in \{0, \dots, p-1\}.$
 Define now $\la\in \widehat{\AA}$ by 
 $$
 \la(x_\infty, (x_p)_{p\in \P})= \la_\infty(a_\infty)\prod_{p\in \P} \la_p(x_p) \tout  (x_\infty, (x_p)_{p\in \P})\in \AA
 $$
 (the product of the right hand side of the formula is well-defined, since
 $x_p \in \ZZ_p$  for almost every $ p$ and since $ \la_p$ is trivial on $\ZZ_p$ for
 every $p$).
 Then $\la$ satisfies   Condition~\ref{(Iso)}, that is, 
$$
 \AA\mapsto \widehat{\AA}, a\mapsto (b\mapsto \la(ba))
$$
is a topological  isomorphism (see \cite[Chap. IV, Theorem 3]{Weil-Book}).

 \item Let $\mathbf{k}$ be a local field.
 For an  integer $n\geq 1,$ let $R=M_n(\mathbf{k})$ be the ring of $n\times n$-matrices over $\mathbf{k};$
it is a locally compact ring for the topology obtained by identifying  the additive group
of  $M_n(\mathbf{k})$ with $\mathbf{k}^{n^2}.$
Let $\la_0\in\widehat{\mathbf{k}}$ be a non trivial character of  $\mathbf{k}.$ 
Let $\la\in\widehat{M_n(\mathbf{k})}$ be defined by 
$$
\la(A)= \la_0(\Tr(A)) \tout A\in M_n(\mathbf{k}),
$$
where $\Tr(A)\in \mathbf{k}$ is the trace of $A.$ 
Since  $\Tr(AB)=\Tr(BA)$ for all $A,B,$ the character $\la$ satisfies  the symmetry
Condition~\ref{Sym}. It satisfies  Condition~\ref{(Iso)} as well.  Indeed,
in view of  Condition~\ref{Sym}, the kernel of the map 
$$\nabla_\lambda: M_n(\mathbf{k})\mapsto \widehat{M_n(\mathbf{k})}, A\mapsto (X\mapsto \la(XA))
$$
is a  two-sided ideal of  $M_n(\mathbf{k})$; since $M_n(\mathbf{k})$
is a simple ring and   $\la$ is non trivial, it follows that $\nabla_\lambda $ is injective.

On the one hand, viewing   the additive group
of  $M_n(\mathbf{k})$ as $\mathbf{k}^{n^2}, $
Item (1) above shows that every $\chi \in \widehat{M_n(\mathbf{k})}$
is of the form $X\mapsto \la (\vfi(X))$ for some 
linear form $ \vfi\in \Hom_{\mathbf{k}} (M_n(\mathbf{k}), \mathbf{k})$
on $M_n(\mathbf{k}).$
On the other hand, it is well-known that every such $\vfi$
can be written as the map
$X\mapsto \Tr(XA)$ for  some $A\in M_n(\mathbf{k}).$
This shows that  the continuous injective map $\nabla_\lambda$ is surjective and is hence a topological isomorphism.

\item The previous example can be generalized as follows to simple central algebras over local fields;
for general  facts concerning these algebras, see \cite[Chap IX]{Weil-Book}).
 
 Let $\mathbf{k}$ be a local field. Let $\mathcal{A}$ be a finite-dimensional central simple algebra over $\mathbf{k}.$ 
 Then $\mathcal{A}$  is isomorphic to a   matrix algebra $M_n(\mathbf{D})$  over some
central division algebra $\mathbf{D}$ over $\mathbf{k}$ for some $n\geq 1.$ 
 For instance, if $\mathbf{k}= \RRR,$ then  $\mathbf{D}$ is  necessarily 
 is isomorphic to $\RRR$ or to $\HH,$
  where $\HH$ is the $4$-dimensional division algebra of the usual real quaternions.
Let $\tau  \in \Hom_{\mathbf{k}} (\mathcal{A}, \mathbf{k})$ be the so-called \emph{reduced trace} on $\mathcal{A}$,
which is defined as follows:  let  $\mathbf{K}$ be the algebraic closure
of ${\mathbf{k}}$; then there exists an algebra isomorphism
$F: \mathcal{A}\otimes_{\mathbf{k}} \mathbf{K} \to M_r({\mathbf{k}}),$
for some $r$ and one defines $\tau(a)= \Tr(F(a))$ for $a\in \mathcal{A}.$
 
 Let   $\la_0$ be a non trivial unitary character of $\mathbf{k}$
and define  $\la\in\widehat{\mathcal{A}}$  by 
$$
\la(a)= \la_0(\tau(a)) \tout a\in \mathcal{A}.
$$
Then  $\la$ satisfies   Conditions~\ref{Sym} and ~\ref{(Iso)}.
For instance, when $\mathcal{A}$ is the algebra of real quaternions
$$
\HH= \{t+u i+ vj+ wk\, : \, t, u,v, w \in \RRR\},
$$
we have $\tau(a)= 2t$ for  $a= t+u i+ vj+ wk$
and so, taking $\la_0 = e^{2\pi \cdot}, $ we have 
$$
\la (a)= e^{4\pi i t} \tout a= t+u i+ vj+ wk\in \HH.
$$

\item Let $R$ be a finite field or a direct sum of finite fields. 
It can  easily be checked that every non trivial unitary character
$\la$ of $R$ satisfies   Condition~\ref{(Iso)}.
Then same holds when $R$ is the ring $\ZZ/n\ZZ$ of integers 
modulo $n\in \NN^*.$ 

\item For $R=\ZZ$, there exists \emph{no}  $\la\in \widehat{\ZZ}$ which satisfies
Condition~\ref{(Iso)}. Indeed, every $\la\in \widehat{\ZZ}$ is of the form 
$$
\la_\theta (k) \, = \, e^{2 \pi i \theta k}
\tout k\in \ZZ,
$$
for a uniquely determined  real number $\theta \in [0,1)$. In particular,
$\widehat{\ZZ}$ is uncountable and, since $\ZZ$ is countable,
$\nabla_\lambda: \ZZ\to \widehat{\ZZ}$ cannot be surjective.
Observe, however, that Condition~\ref{Dens} is satisfied by $\la_\theta$ 
for every irrational $\theta.$ 
\item  Similarly to the previous Item,  if $R$  any countable  infinite ring (equipped with the discrete topology), then there exists no  $\la\in \widehat{R}$ which satisfies Condition~\ref{(Iso)}. 
\item Let $R= \ZZ_p$ be the ring of  $p$-adic integers for  a prime number  $p.$
Then,  since $\ZZ_p$ is compact and infinite, its dual group $\widehat{\ZZ_p}$ 
is a discrete  infinite group; in fact, the unitary character  $\la_p\in \widehat{\QQ_p}$
from Item (2) induces an isomorphism 
$ \QQ_p/\ZZ_p \cong\widehat{\ZZ_p},$ given by 
$a+ \ZZ_p \mapsto \nabla_{\la_p}(a)|_{\ZZ_p}$ 
So, there exists no  $\la\in \widehat{\ZZ_p}$ which satisfies
 Condition~\ref{Dens} and, let alone, Condition~\ref{(Iso)}.
 However, one can easily determine the pairs of unitary representations $(U, V)$ 
 of $\ZZ_p$ on a  Hilbert space $\H$ satisfying \ref{CCR-la} for any  $\la\in \widehat{\ZZ_p}$;
 indeed, assume by simplicity that $(U,V)$ is irreducible.
 The sequence of ideals  $J_n= p^n\ZZ_p$ ($n\geq 1$)  
 is a basis of neighbourhoods in $0$ in $\ZZ_p.$ So, there exists $N\geq 1$ such that 
 $\la(J_N)=1$ and  such that the space $\H^{J_N}$ of invariant vectors under $U(J_N)$ and $V(J_N)$
 is non zero (see \cite[1.1.5]{BHV}); by irreducibilty, we have then $\H= \H^{J_N}.$
 This shows that $\la$ and  $(U, V)$ factorize to a character 
 $\overline{\la}$ and a pair $(\overline{U}, \overline{V})$
 of unitary representations of the finite  quotient ring $\ZZ_p/J_N\cong \ZZ/ p^N \ZZ $ 
 satisfying  $\rm{(CCR)}_{\overline{\lambda}}.$
 \item The conclusion of the previous Item can be  generalized  to any compact ring.
 Indeed, let $R$ be such a ring (commutative or not). Then
 $R$ is a profinite ring, that is,  $R$ is topologically isomorphic to a projective limit  $\varprojlim_i R_i$ 
 of finite rings $R_i$ (see \cite[Proposition 5.1.2]{Ribes-Zaleskii}).
 As in the previous item, the study of pairs of unitary representations $(U, V)$ 
 of $R$ satisfying \ref{CCR-la} for any  $\la\in \widehat{R}$
 can be reduced to the study of such pairs for the finite rings $R_i.$
 
 \end{enumerate}

 \section{Heisenberg groups}
 \label{Heisenberg}
 We rephrase our  results in terms of representations of Heisenberg groups.
  Let $R$ be  a  (not necessarily commutative) locally compact ring with unit 
and let $d\geq 1$ be an integer.
The \emph{Heisenberg group} of dimension $2d+1$ over $R$ is 
is defined as the group $H_{2d+1}(R)$ of $(2d+1)\times (2d+1)$-matrices with coefficients  in $R$ of  the 
form 
\[
 m(a,b, c)=m(a_1, \dots, a_d, b_1, \dots, b_d, c):=
\left(
\begin{array}{cccccc}
 1&a_1&\dots &a_{d}&c\\
 0&1&\dots&0&b_1\\
 \vdots & \ddots &\ddots& \vdots&\vdots \\
 0&0&\dots&1&b_d\\
0&0&\dots&0&1\
\end{array}
\right),
\]
The group law on  $H_{2d+1}(R)$ is given by 
$$
m(a,b, c) m(a', b', c') \, = \, m(a+ a', b+b',  c+c' + a\cdot b'),
$$
for  $a, b, a', b'\in R^d$ and $c, c'\in R.$
Equipped with the  topology induced from $M_{2d+1}(R),$ 
 the group $H_{2d+1}(R)$ is a locally compact group.
Its  center is 
$$C= \{m(0,0, c)\, : \, c\in R\} \cong \, R.$$

Let $\pi: G\to U(\H)$
and $\pi': G\to U(\H')$ be two unitary representations  of  a locally compact group  $G$ on Hilbert spaces $\H$ and $\H'$.
Recall that $\pi$ and $\pi'$  are equivalent
if there exists a unitary isomorphism $\Phi: \H\to \H'$
such that 
$$\Phi \pi(g) \Phi^*= \pi'(g)\tout g\in G.$$
We  say that  $\pi$ and $\pi'$ are 
\emph{approximately equivalent} if there exists
 a sequence  $(\Phi_n)_{n}$ of unitary  isomorphisms $\Phi_n: \H_1\to \H_2$
 such that, for every compact subset $K$ of $G,$ we have
$$ \lim_{n\to \infty} \sup_{g\in K} \Phi_n \pi(g) \Phi_n^*- \pi'(g)\Vert=0.$$
Approximate equivalence is an equivalence relation on, say, the set of unitary representations
of $G$ on separable Hilbert spaces.

Fix  $\lambda\in \widehat{R}$.
Let $(U,V)$ be a  pair of unitary representations   of  $R^d$  on a Hilbert space $\H$   satisfying \ref{CCR-la},
For $m(a,b,c)\in H_{2d+1}(R),$ define a unitary operator $\pi(m(a,b,c))$ on $\H$
by 
$$
 \pi(m(a,b,c)=\la(c) V(b)U(a).
 $$
 One checks immediately that $\pi: H_{2d+1}(R)\to U(\H)$
 is a unitary representation of $H_{2d+1}(R);$ observe that 
  $\pi$ has $\la$ as central character, that is, $\pi(m(0,0,c))= \la(c){\rm Id}_\H$
 for $c\in R$.
 If $(U',V')$ is another pair which is equivalent (respectively, approximately equivalent) to $(U,V)$,
 then the corresponding representation $\pi'$ is equivalent (respectively, approximately equivalent) to $\pi.$

 Conversely, let $\pi: H_{2d+1}(R)\to U(\H)$ 
 be  a unitary representation of $H_{2d+1}(R)$ which 
 has $\la$ as central character.
 Define 
 $$
 U(a)= \pi(m(a, 0,0)) \quad \text{and}  \quad V(b)= \pi(m(0, b,0))
 $$
 for $a, b\in R^d.$ Then   $(U, V)$ is a pair of unitary representations   of  $R^d$  on $\H$   satisfying \ref{CCR-la}.
 Moreover, if  $\pi'$ is another representation 
 of  $H_{2d+1}(R)$ which is equivalent (respectively, approximately equivalent) to $\pi$,
 then $\pi'$ has  $\la$ as central character
 and the corresponding pair $(U',V')$  is equivalent (respectively, approximately equivalent) to $(U,V)$.

 Summarizing,  the map  $(U, V) \mapsto  \pi$ described above
induces a    bijection  between
\begin{itemize}
\item the set  of equivalence (respectively, approximate equivalence)  pairs of unitary representations   of  $R^d$
 satisfying \ref{CCR-la}, and 
\item the set  of   equivalence (respectively, approximate equivalence)  classes of   unitary representations of $H_{2d+1}(R)$ which have  $\la$ as central character,
\end{itemize}
The representation $\pi_{\text{Schr}}$ of $H_{2d+1}(R)$ corresponding to the 
Schr\"odinger pair $(U_{\text{Schr}}, V_{\text{Schr}})$ with central character $\la$ is given
on $L^2(R^d, \mu)$ by 
$$
\pi_{\text{Schr}}(m(a,b,c))f)(x)= \la(c)\lambda(x\cdot b)f(x+a) \quad f \in L^2(R^d, \mu),
$$
where $\mu$ is a Haar measure on $(R^d,+).$

We can now reformulate Theorems~\ref{Theo2} and ~\ref{Theo1} in terms of 
representations of $H_{2d+1}(R).$
\begin{thmx}
\label{Theo2-bis}
(\textbf{Stone-von Neumann Theorem}) 
Let  $R$ be  a  unital   second-countable locally compact ring and let  $\lambda\in \widehat{R}$
 satisfying Conditions~\ref{Sym} and ~\ref{(Iso)}.
For  $d\geq 1,$  let $\pi$ be a unitary representation of $H_{2d+1}(R)$ 
 on a separable Hilbert space  with central character $\la.$
Then  $\pi^{(\infty)}$  is equivalent to $\pi_{\text{Schr}}^{(\infty)}$.
\end{thmx}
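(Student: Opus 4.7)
The plan is to deduce Theorem~\ref{Theo2-bis} directly from Theorem~\ref{Theo2} by exploiting the bijection, established in the preceding discussion, between unitary representations of $H_{2d+1}(R)$ with central character $\la$ and pairs $(U,V)$ of unitary representations of $R^d$ satisfying \ref{CCR-la}. There is essentially no new work to do beyond verifying compatibility with inflations.

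First I would take the representation $\pi$ on the separable Hilbert space $\H$ and associate to it the pair $(U,V)$ on $\H$ defined by
$$U(a)=\pi(m(a,0,0)),\qquad V(b)=\pi(m(0,b,0))\tout a,b\in R^d.$$
The commutation identity $m(a,0,0)m(0,b,0)=m(a,b,a\cdot b)$ and $m(0,b,0)m(a,0,0)=m(a,b,0)$ in $H_{2d+1}(R)$, combined with the assumption that $\pi$ has central character $\la$, immediately yields that $(U,V)$ satisfies \ref{CCR-la}. Observe also that $\pi$ is recovered from $(U,V)$ by $\pi(m(a,b,c))=\la(c)V(b)U(a)$.

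Next I would note that forming the inflation commutes with the $(U,V)\leftrightarrow\pi$ correspondence: the pair associated to $\pi^{(\infty)}$ is precisely $(U^{(\infty)},V^{(\infty)})$, and conversely, the representation associated to $(U^{(\infty)},V^{(\infty)})$ is $\pi^{(\infty)}$. Moreover, the pair associated to $\pi_{\text{Schr}}$ is by construction the Schr\"odinger pair $(U_{\text{Schr}},V_{\text{Schr}})$.

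Applying Theorem~\ref{Theo2} to $(U,V)$, I obtain a unitary isomorphism
$$\Phi:\H^{(\infty)}\to L^2(R^d)^{(\infty)}$$
intertwining $(U^{(\infty)},V^{(\infty)})$ with $(U_{\text{Schr}}^{(\infty)},V_{\text{Schr}}^{(\infty)})$. Since every element of $H_{2d+1}(R)$ factors as $m(a,b,c)=m(0,0,c)m(0,b,0)m(a,0,0)$ and $\pi^{(\infty)}(m(0,0,c))=\la(c)\,\mathrm{Id}$ (and similarly for $\pi_{\text{Schr}}^{(\infty)}$), the same $\Phi$ intertwines $\pi^{(\infty)}$ with $\pi_{\text{Schr}}^{(\infty)}$, concluding the proof.

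The only step requiring any care is the verification that inflation is compatible with the bijection and that intertwining at the level of the generating operators $U(a)$, $V(b)$ suffices to give intertwining of the full representation $\pi$; both are straightforward consequences of the factorisation of a general element $m(a,b,c)$ as a product of a central element, a ``$V$-element'' and a ``$U$-element.'' There is no real obstacle here: the work is genuinely done in Theorem~\ref{Theo2}, and Theorem~\ref{Theo2-bis} is a reformulation.
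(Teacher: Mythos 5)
Your proposal is correct and is exactly the argument the paper intends: Section~\ref{Heisenberg} sets up the bijection $(U,V)\leftrightarrow\pi$ (including its compatibility with equivalence and inflation), and Theorem~\ref{Theo2-bis} is then stated as a direct reformulation of Theorem~\ref{Theo2}. Your explicit verification of the factorisation $m(a,b,c)=m(0,0,c)m(0,b,0)m(a,0,0)$ and of the commutation identities is the same routine check the paper leaves implicit.
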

\begin{thmx}
\label{Theo1-bis}
Let  $R$ be  as Theorem~\ref{Theo2-bis}, and  let  $\lambda\in \widehat{R}$
satisfying  Conditions~\ref{Sym} and ~\ref{Dens}. For  $d\geq 1,$
 let $\pi$ be a unitary representation of $H_{2d+1}(R)$ 
 on a separable Hilbert space with central character $\la.$
 Then  $\pi^{(\infty)}$  is approximately equivalent to $\pi_{\text{Schr}}^{(\infty)}$.
 \end{thmx}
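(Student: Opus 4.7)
The plan is to reduce Theorem~\ref{Theo1-bis} directly to Theorem~\ref{Theo1} via the explicit correspondence, already set up in the discussion preceding the statement, between unitary representations of $H_{2d+1}(R)$ with central character $\la$ and pairs $(U,V)$ of unitary representations of $R^d$ satisfying \ref{CCR-la}. Given $\pi$ as in the theorem, I would set $U(a) := \pi(m(a,0,0))$ and $V(b) := \pi(m(0,b,0))$; the group law in $H_{2d+1}(R)$ together with the central character property yields at once that $(U,V)$ satisfies \ref{CCR-la} and that
$$\pi(m(a,b,c)) = \la(c)\, V(b) U(a) \tout (a,b,c) \in R^d\times R^d \times R.$$
Passing to inflations, $\pi^{(\infty)}$ is given by the same formula with $U^{(\infty)}, V^{(\infty)}$ in place of $U, V$, and likewise $\pi_{\text{Schr}}^{(\infty)}$ corresponds to $(U_{\text{Schr}}^{(\infty)}, V_{\text{Schr}}^{(\infty)})$.

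I then apply Theorem~\ref{Theo1} to the pair $(U,V)$: there exists a sequence $(\Phi_n)_n$ of unitary isomorphisms such that $\Phi_n U^{(\infty)}(a) \Phi_n^* \to U_{\text{Schr}}^{(\infty)}(a)$ and $\Phi_n V^{(\infty)}(b) \Phi_n^* \to V_{\text{Schr}}^{(\infty)}(b)$ in operator norm, uniformly on compact subsets of $R^d$. To conclude, fix a compact set $K \subset H_{2d+1}(R)$; the continuous coordinate projections $m(a,b,c)\mapsto a,\, m(a,b,c)\mapsto b$ send $K$ to compact subsets $K_1, K_2$ of $R^d$. For $g=m(a,b,c) \in K$, write $A_n(a) := \Phi_n U^{(\infty)}(a) \Phi_n^*$ and $B_n(b) := \Phi_n V^{(\infty)}(b) \Phi_n^*$. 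Since $|\la(c)|=1$ and all operators in sight are unitary, the identity $XY - X'Y' = (X-X')Y + X'(Y-Y')$ gives
$$\bigl\Vert \Phi_n \pi^{(\infty)}(g) \Phi_n^* - \pi_{\text{Schr}}^{(\infty)}(g) \bigr\Vert \leq \bigl\Vert B_n(b) - V_{\text{Schr}}^{(\infty)}(b)\bigr\Vert + \bigl\Vert A_n(a) - U_{\text{Schr}}^{(\infty)}(a)\bigr\Vert,$$
and the right-hand side tends to $0$ uniformly as $g$ ranges over $K$, since $a$ ranges over $K_1$ and $b$ over $K_2$.

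There is no substantial obstacle to overcome: Theorem~\ref{Theo1-bis} is essentially a reformulation of Theorem~\ref{Theo1}, and the only point requiring attention is the verification that the notion of approximate equivalence translates correctly under the bijection between pairs and Heisenberg representations. This translation is handled entirely by the elementary triangle inequality above, which exploits the unitarity of the relevant operators and the fact that the central scalar $\la(c)$ has modulus one and factors out of the difference. The real content of the result is contained in Theorem~\ref{Theo1}.
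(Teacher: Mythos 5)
Your proof is correct and follows the paper's own route: the paper establishes the $(U,V)\leftrightarrow\pi$ correspondence in Section~\ref{Heisenberg}, asserts without spelling out details that it respects approximate equivalence, and then presents Theorem~\ref{Theo1-bis} as a reformulation of Theorem~\ref{Theo1}. You simply fill in the routine verification (the triangle inequality using unitarity and $|\la(c)|=1$, plus compactness of the coordinate projections) that the paper leaves implicit.
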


\begin{remark}
\label{Rem-Reg}
A crucial role in the proofs of  our results was played by 
the regular pair $(U_{\rm reg}, V_{\rm reg})$  introduced just before Proposition~\ref{Pro-Reg}.
 The corresponding representation   $\pi_{\rm reg}$ of $H_{2d+1}(R)$
 is defined on  the Hilbert space $ L^2(R^d\times R^d, \mu\otimes \mu),$
by 
$$
(\pi_{\rm reg}(m(a,b, c)\xi)(x,y)=  \la(c)\lambda(x\cdot b) \xi(x+a,y+b),
$$
for all $\xi \in L^2(R^d\times R^d)$ and $x, y\in R^d.$ 
It can be checked that $\pi_{\rm reg}$ is equivalent to the  representation
$\ind_{C}^{H_{2d+1}(R)} \la$  induced by the character $\la$ of the centre $C.$

To a  pair $(U,V)$    of unitary representations 
of  $R^d$  on $\H$ satisfying \ref{CCR-la},  we associated  a new pair $(\widetilde{U},\widetilde{V})$   
 on   $L^2(R^d\times R^d, \H)$.
 If $\pi$ is the representation of $H_{2d+1}(R)$ corresponding to $(U,V)$,
 the representation $\widetilde{\pi}$ of $H_{2d+1}(R)$ corresponding to 
 $(\widetilde{U},\widetilde{V})$ is given by 
 defined  by 
$$
(\widetilde{\pi}(m(a, b, c)F)(x,y)= \la(c) V(b) U(a)(F(x+a,y+b)) $$
for all $F \in L^2(R^d\times R^d, \H)$ and $x, y\in R^d.$
It can be checked that 
$\widetilde{\pi}$ is equivalent to the tensor product representation $\rho \otimes \pi,$
where $\rho=\ind_{C}^{H_{2d+1}(R)}1_C $ is the regular representation of $H_{2d+1}(R)/C\cong R^d\times R^d$ lifted to $H_{2d+1}(R).$
Fell's  absorption principle alluded to before Proposition~\ref{Pro-Reg} is that 
$\rho \otimes \pi$ is equivalent to a multiple of $\ind_{C}^{H_{2d+1}(R)} \la.$
\end{remark}


\bibliographystyle{amsalpha}

\end{document}